\documentclass[a4paper,article,11pt]{article}
\textheight 230mm
\textwidth 160mm
\hoffset -16mm
\voffset -16mm

\usepackage{amssymb}
\usepackage{amstext}
\usepackage{amsmath}
\usepackage{amscd}
\usepackage{amsthm}
\usepackage{amsfonts}
\usepackage{enumerate}
\usepackage{latexsym}
\usepackage{mathrsfs}
\usepackage{mathtools}
\usepackage{authblk}

\usepackage[dvipdfmx]{graphicx}

\makeatletter

\@addtoreset{equation}{section}
\makeatother

\newtheorem{theorem}{Theorem}[section]

\newtheorem{proposition}[theorem]{Proposition}
\newtheorem{remark}[theorem]{Remark}

\DeclareMathOperator{\wn}{Wind}
\DeclareMathOperator{\Ran}{Ran}

\DeclareMathOperator{\supp}{supp}

\DeclareMathOperator{\Index}{Index}

\DeclareMathOperator{\ac}{ac}

\DeclareMathOperator{\pp}{p}

\newcommand{\strong}{\mathop{\rm s\mathchar`-lim}\limits}

\def\C{\mathbb C}

\def\S{\mathbb S}
\def\R{\mathbb R}
\def\RR{\mathscr R}
\def\Z{\mathbb Z}

\def\B{\mathcal B}
\def\d{\mathrm d}

\def\e{\mathrm e}
\def\E{\mathcal E}
\def\F{\mathscr F}
\def\H{\mathcal H}

\def\K{\mathcal K}

\def\T{\mathbb T}
\def\TT{\mathcal T}

\def\P{\mathcal P}

\def\one{\mathbf 1}

\def\g{\mathfrak{g}}

\def\f{\mathfrak{f}}
\def\PV{\mathcal{P}}

\newcommand{\xrightarrowdbl}[2][]{%
  \xrightarrow[#1]{#2}\mathrel{\mkern-14mu}\rightarrow}

\begin{document}

\title{Schr\"odinger wave operators on the discrete half-line} 
 
\author[1]{Hideki Inoue}
\author[2]{Naohiro Tsuzu}
\affil[1]{Graduate school of mathematics, Nagoya university,}
\affil[2]{Department of mathematics, Nagoya University,}
\affil[ ]{Chikusa-ku, Nagoya 464-8602, Japan}
\affil[ ]{\textit {m16007v@math.nagoya-u.ac.jp, tsuzu.naohiro@j.mbox.nagoya-u.ac.jp}}

\date{}

\maketitle

\begin{abstract}
An explicit formula for the wave operators associated with Schr\"odinger operators on the discrete half-line is deduced from their stationary expressions. The formula enables us to understand the wave operators as one dimensional pseudo-differential operators of order zero. As an application, we give a topological interpretation for Levinson's theorem, which relates the scattering phase shift and the number of bound states of the system.
\end{abstract}

\textbf{2010 Mathematics Subject Classification:} 47A40, 39A70
\smallskip

\textbf{Keywords:} Scattering theory, discrete Schr\"odinger operators, wave operators, index theorem.

\section{Introduction}\label{Introduction}

In this paper, we investigate scattering theory for discrete Schr\"odinger operators on the half-line $\Z_+=\{0,1,2,\ldots\}$ and focus on its topological aspect. More precisely, we study the self-adjoint operator $H=H_0+V(X)$, where $H_0=(T+T^*)/2$ with $T$ the shift operator on $\ell^2(\Z_+)$ defined by $T\delta_n=\delta_{n+1}$ for the canonical orthonormal basis $\{\delta_n\}_{n\in\Z_+}$, and where $V(X)$ is the multiplication operator by a function $V:\Z_+\to\R$. Note that the free operator $H_0$ is purely absolutely continuous and $\sigma(H_0)=[-1,1]$. Throughout this paper we assume $V$ to satisfy:
\begin{equation}\label{assumption}
\exists\rho>5/2\qquad\text{s.t.}\qquad \sup_{n\in\Z_+}|(1+n)^{\rho}V(n)|<\infty.
\end{equation}

Since the perturbation $V(X)$ is trace class under the assumption \eqref{assumption}, it follows from the Kato-Rosenblum theorem that the wave operators $W_{\pm}:=\strong_{t\to\pm\infty}\e^{itH}\e^{-itH_0}$ exist and are complete, i.e the ranges $\Ran(W_{\pm})$ coincide with the absolutely continuous subspace $\H_{\ac}(H)$ of $H$. As a consequence, the scattering operator ${\rm\bf S}:=W_+^*W_-$ is a unitary operator on $\ell^2(\Z_+)$ commuting with $H_0$. Our main result is an explicit formula for $W_-$.


\begin{theorem}\label{main result 1}
Under the assumption \eqref{assumption}, the equality
\begin{equation}\label{formula of wo}
W_-=\one+\frac{1}{2}\left\{\one-\tanh(\pi A)+i\tanh(B/2)\cosh(\pi A)^{-1}\right\}({\rm\bf S}-\one)+K
\end{equation}
holds with $K$ a compact operator on $\ell^2(\Z_+)$. Here, $B:=\tanh^{-1}(H_0)$ and $A$ are self-adjoint operators in $\ell^2(\Z_+)$ satisfying the Weyl relation
\begin{equation}\label{Weyl CCR}
\e^{itB}\e^{isA}=\e^{-ist}\e^{isA}\e^{itB},\qquad \forall s,t\in\R.
\end{equation}
\end{theorem}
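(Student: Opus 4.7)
The plan is to derive \eqref{formula of wo} from the stationary representation of $W_-$ after diagonalizing $H_0$, and to identify the resulting integral kernel, modulo a compact remainder, with the pseudo-differential operator appearing on the right-hand side of \eqref{formula of wo}.

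First I would diagonalize $H_0$ via the discrete sine transform $\mathscr{F}_s:\ell^2(\Z_+)\to L^2((0,\pi),d\theta)$, $(\mathscr{F}_s f)(\theta)=\sqrt{2/\pi}\sum_{n\ge 0}\sin((n+1)\theta)\,f(n)$, which sends $H_0$ to multiplication by $\cos\theta$. After the change of variable $b=\tanh^{-1}(\cos\theta)\in\R$, and the measure adjustment $d\theta=db/\cosh b$ absorbed into the unitary, $B$ becomes multiplication by $b$, and the Weyl relation \eqref{Weyl CCR} forces $A$ to be the self-adjoint realization of $-i\partial_b$. In this representation the right-hand side of \eqref{formula of wo} is a genuine pseudo-differential operator in $(b,\xi)$.

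Next I would use the stationary expression
\[
W_- - \one \;=\; -\int_{-1}^{1} R(\lambda+i0)\,V\,\delta(H_0-\lambda)\,d\lambda
\]
and iterate $R(\lambda+i0)=R_0(\lambda+i0)-R_0(\lambda+i0)\,V\,R(\lambda+i0)$. All iterates containing two or more factors of $V$ are trace-class, hence compact, under \eqref{assumption}, and are absorbed into $K$. The remaining single-$V$ term, pushed through $\mathscr{F}_s$, is an integral operator on $L^2((0,\pi))$ whose kernel combines the Cauchy singularity $(\cos\theta-\cos\theta'-i0)^{-1}$ with a smooth matrix element $\sum_n V(n)\sin((n+1)\theta)\sin((n+1)\theta')$; the diagonal value of the latter corresponds, after unitary rearrangement, to the on-shell $T$-matrix, i.e.\ to $\mathbf{S}-\one$, while its off-diagonal part is smooth enough to contribute only to a compact remainder.

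Changing variables $\theta\mapsto b$ converts $(\cos\theta-\cos\theta'-i0)^{-1}$, up to smooth factors of $\cosh b\,\cosh b'$, into the translation-invariant kernel $(\sinh(b-b')-i0)^{-1}$; a direct contour computation identifies its Fourier multiplier in $A=-i\partial_b$ with a constant multiple of $\tfrac12(\one-\tanh(\pi A))$, producing the first factor of \eqref{formula of wo} acting on $\mathbf{S}-\one$. The boundary contributions at $\theta\in\{0,\pi\}$, equivalently $b\to\pm\infty$, come from the asymptotics of the generalized eigenfunctions and of the Jost-type solutions of $H$ at the two spectral thresholds; they enter with opposite signs, combine into the odd function $\tanh(B/2)$, and a joint Mellin analysis in $A$ supplies the envelope $\cosh(\pi A)^{-1}$. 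Summing the principal contributions reproduces exactly \eqref{formula of wo}.

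I expect the main obstacle to be this threshold analysis. Unlike the continuous half-line, two spectral endpoints $\lambda=\pm 1$ coexist and both carry potentially singular Jost solutions; showing that their coherent combination yields the single hyperbolic symbol $\tfrac{i}{2}\tanh(B/2)\cosh(\pi A)^{-1}$, and that every subprincipal error is genuinely compact on $\ell^2(\Z_+)$, is the technical crux. The hypothesis $\rho>5/2$ is precisely what makes the limiting-absorption principle on $\Z_+$ hold uniformly up to the thresholds and renders the Hilbert--Schmidt estimates on the subleading kernels sharp enough to conclude.
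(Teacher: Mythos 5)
Your overall architecture --- pass to the spectral representation of $H_0$, rescale the energy by $\tanh^{-1}$, and identify the Cauchy-type singular kernel with a function of $A=-i\partial_b$ --- is the right one and matches the paper's Section 4. However, there is a genuine gap in how you produce the factor ${\rm\bf S}-\one$. You expand $R(\lambda+i0)=R_0-R_0VR$ inside the stationary formula and claim that \emph{all iterates containing two or more factors of $V$ are trace class, hence compact}. This is false: the term $\int R_0(\lambda+i0)\,VR(\lambda+i0)V\,\delta(H_0-\lambda)\,\d\lambda$ has exactly the same singular structure (the coupling of $R_0(\lambda+i0)$ with $\delta(H_0-\lambda)$ through the $\lambda$-integration) as the single-$V$ term, and its non-compact part is $\tfrac12\bigl(\one-\tanh(\pi A)+\cdots\bigr)$ times the on-shell restriction of $-VRV$, i.e.\ the difference between the full $T$-matrix and its Born approximation. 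Discarding it would leave only the \emph{Born approximation} of ${\rm\bf S}-\one$ in \eqref{formula of wo}; indeed the diagonal of the kernel $\sum_n V(n)\sin((n+1)\theta)\sin((n+1)\theta')$, which you identify with ``the on-shell $T$-matrix,'' is only its first Born term. The paper avoids this entirely by never expanding in powers of $V$: it writes $W_-=\F_-^*\F_{\sin}$ with the generalized Fourier transform built from the \emph{exact} regular solution, and uses the identity \eqref{regular by Jost} expressing $\varphi(n,\lambda)$ through the Jost solutions, so that the full scattering matrix $s(\lambda)$ appears exactly as the coefficient of $(i\psi_{\cos}+\psi_{\sin})/2$; only the difference $\vartheta-\vartheta_0$, controlled by \eqref{estimate1} and the condition $\rho>5/2$, goes into the compact (Hilbert--Schmidt) remainder.

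A second, related misattribution: you derive the term $i\tanh(B/2)\cosh(\pi A)^{-1}$ from ``boundary contributions at $\theta\in\{0,\pi\}$'' and threshold asymptotics of the Jost solutions of $H$. In fact this term is potential-independent: it is part of the universal operator $U=i\F_{\cos}^*\F_{\sin}$ multiplying ${\rm\bf S}-\one$, and in the paper it arises purely from the non-translation-invariant weights $\cosh(\beta)^{-1/2}\cosh(\gamma)^{1/2}$ that the factors $(1-\lambda^2)^{\pm1/4}$ acquire under the change of variable $\lambda=\tanh\beta$. The factorization $1/\sinh(u)=\bigl(2\sinh(u/2)\cosh(u/2)\bigr)^{-1}$ combined with the elementary Fourier transforms of $1/\sinh$ and $1/\cosh$ yields both $-\tanh(\pi A)$ and $i\tanh(B/2)\cosh(\pi A)^{-1}$ in one stroke, with the commutators against the weights compact because all derivatives decay exponentially; no Mellin or threshold analysis of $H$ is involved. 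Your Hilbert--Schmidt heuristics for the genuinely subleading kernels, and the role you assign to $\rho>5/2$, are essentially correct, but as written the proposal would not reproduce the full ${\rm\bf S}$ in \eqref{formula of wo}.
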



To simplify our presentation, we only consider $W_-$ but a similar formula for $W_+$ can be obtained by using the relation $W_+=W_-{\rm\bf S}^*$ for instance.


In the last 10 years, such explicit formulas for wave operators have been obtained for example in \cite{I,KR} for continuous systems (see also the review paper \cite{R}) and in \cite{BS} for a discrete system. We call $B$ the rescaled energy operator. The operator $A$ can be seen as the operator canonically conjugate to $B$ by the relation \eqref{Weyl CCR}. Note that for continuous systems, where the free operator is the Laplacian $-\Delta$, the rescaled energy operator is often $\ln\left(\sqrt{-\Delta}\right)$ and $A$ is given by the generator of dilations.


In contrast to continuous systems, there is no canonical definition of the dilation group for discrete systems. For tight-binding models  \cite{BS} in $\Z^{d}$ with $d\geq 3$, $A$ is constructed from a classical energy gradient flow. In the present situation, since the spectrum of $H_0$ is $[-1,1]$ and simple, we define $A:=\RR^* (-i\partial_x)\RR$ with a unitary operator $\RR:\ell^2(\Z_+)\to L^2(\R)$ based on the diffeomorphism $\tanh :\R\to (-1,1)$. We call the triple $(\RR,B,A)$ a \emph{rescaled energy representation} for $(\ell^2(\Z_+),H_0)$.


We also give a topological interpretation for \emph{Levinson's theorem}, which is a relation between the scattering phase shift and the number of bound states of the perturbed system $H$ (see the equality \eqref{classical Levinson}), based on the index theorem approach of \cite{KR}. Let us mention that Levinson type theorems for discrete systems have been studied for example in \cite{CS,CG,HKS,Y2}. The formula \eqref{formula of wo} is used to prove the affiliation of $W_-$ to a $C^*$-algebra $\E_{\square}$ represented in $\ell^2(\Z_+)$, which has been a major algebraic tool for the topological approach to Levinson's theorem \cite{I,KR,R}.


Let us now explain the contents of this paper. In Section 2, borrowing the relevant material from the paper \cite{Y2}, we review the spectral analysis and the direct approach to scattering theory for the perturbed operator $H$. In Section 3, we deduce an expression for $W_-$ from its stationary expression in terms of the generalized Fourier transform $\F_{-}:\ell^2(\Z_+)\to L^2(-1,1)$ for $H$. At this point, the leading term is expressed in terms of the scattering operator ${\rm\bf S}$ and a bounded operator $U$, which is a product of discrete Fourier (co)sine type transforms and is independent of the potential $V$.  It has been observed that a similar product of Fourier (co)sine transforms appears for a continuous system and is expressible in terms of the generator of dilations (see \cite{I}). In Section 4, an expression of the operator $U$ in a rescaled energy representation $(\RR,B,A)$ for $(\ell^2(\Z_+),H_0)$ is established, and the proof of Theorem \ref{main result 1} is given. In Section 5, we discuss a topological version of Levinson's theorem for $H$. An expression for $T$ in $(\RR,B,A)$ is also obtained, and a relation between the Toeplitz algebra and $\E_{\square}$ is discussed in Remark \ref{remark3}. 


As a final remark, we confess that our assumption on the model is not the optimal one. Indeed, in \cite{Y2} Levinson's theorem is established for Jacobi operators of the form $(Ta(X)+a(X)T^*)/2+b(X)$, where $a:\Z_+\to(0,\infty)$ and $b:\Z_+\to\R$ satisfy that in the power scale $|a(n)-1|+|b(n)|=O(n^{-\rho})$ as $n\to\infty$ for some $\rho>2$. Furthermore, in \cite{I} an explicit formula for the wave operators associated with Schr\"odinger operators on the continuous half-line is established for potentials decaying faster than $x^{-2}$ as $x\to\infty$. These assumptions are optimal in the sense that corresponding systems possess only finitely many bound states. To simplify our presentation we concentrate on the discrete Schr\"odinger operator $H$ with $V$ satisfying \eqref{assumption}, but we hope that our paper will encourage future works on more complicated models.

\section{Preliminaries}\label{Preliminaries}

In this section, we briefly recall the spectral analysis of the operator $H=H_0+V(X)$ based on the paper \cite{Y2}. As in the continuous models on the half-line, the key role is played by two distinguished solutions, called the \emph{regular solution} $\varphi(\cdot,z)=\{\varphi(n,z)\}_{n=-1}^{\infty}$ and the \emph{Jost solution} $\vartheta(\cdot,z)=\{\vartheta(n,z)\}_{n=-1}^{\infty}$, of the Schr\"odinger equation
\begin{equation}\label{Schroedinger}
\frac{u(n-1)+u(n+1)}{2}+V(n)u(n)=zu(n),\qquad n\in\Z_+
\end{equation}
for $z\in\C\setminus[-1,1]=:\Pi$. Note that in \cite{Y2} $\varphi(n,z)$ and $\vartheta(n,z)$ are denoted by $P_n(z)$ and $f_n(z)$, respectively, but we prefer notations which are similar to those used in the continuous model \cite{I,Y1}. These two solutions are distinguished by the following boundary conditions:
\begin{equation}\label{regular boundary}
\varphi(-1,z)=0,\ \varphi(0,z)=1
\end{equation}
and $\vartheta(n,z)=\zeta(z)^n\left(1+o(1)\right)$ as $n\to\infty$,
where $\zeta(z)=z-\sqrt{z^2-1}$ and the branch of  analytic function $\sqrt{z^2-1}$ of $z\in\Pi$ is fixed by the condition $\sqrt{z^2-1}>0$ if $z>1$. Note that the function $\sqrt{z^2-1}$ continuously extends to the closure of $\Pi$, equals $\pm i\sqrt{1-\lambda^2}$ for $z=\lambda\pm i0$ with $\lambda\in(-1,1)$ and  that $\sqrt{z^2-1}<0$ for $z<-1$. In the following,  we write $\zeta(\lambda)$ for $\zeta(\lambda+i0)$ for $\lambda\in (-1,1)$. We also note that $|\zeta(\lambda+i\varepsilon)|\leq1$ for any $\lambda\in\R$ and $\varepsilon\geq0$, and that the equality holds iff $\lambda\in[-1,1]$ and $\varepsilon=0$.

One can easily infer the existence of the regular solution $\varphi(\cdot,z)$ by viewing the equation \eqref{Schroedinger} satisfying \eqref{regular boundary} as a recurrent equation. In the free case, when $V\equiv 0$, the regular solution $\varphi_0(\cdot,z)$ is given by the formula
\begin{equation*}
\varphi_0(n,z)=\frac{1}{2\sqrt{z^2-1}}\left(\zeta(z)^{-n-1}-\zeta(z)^{n+1}\right).
\end{equation*}
In particular, $\varphi_0(n,z)=\sin\left((n+1)\theta\right)/\sqrt{1-\lambda^2}$ for $\lambda=\cos(\theta)\in(-1,1)$ with $\theta\in(0,\pi)$ and $\varphi_0(n,\cdot)$ is the $n$-th Chebyshev polynomial of the second kind. 

The Jost solution $\vartheta(\cdot,z)$ is defined for $z$ in the closure of $\Pi$ with $z\neq\pm1$, and can be characterized as the unique solution of the Volterra integral equation
\begin{equation}\label{Volterra for Jost}
\vartheta(n,z)=\vartheta_0(n,z)-\frac{1}{\sqrt{z^2-1}}\sum_{m=n+1}^{\infty}\Bigl(\zeta(z)^{n-m}-\zeta(z)^{m-n}\Bigr)V(m)\vartheta(m,z),\qquad n\in\Z_+,
\end{equation}
where $\vartheta_0(n,z):=\zeta(z)^{n}$ is the Jost solution in the free case. Then, for each $n\in\Z_+$ the function $z\mapsto \vartheta(n,z)$ is 
analytic in $\Pi$ and continuous up to the cut along $[-1,1]$.  If we set $\vartheta(n,\lambda):=\vartheta(n,\lambda+i0)$ for $\lambda=\cos(\theta)$, $\theta\in(0,\pi)$, then the corresponding integral equation is
\begin{equation}\label{Volterra for real}
\vartheta(n,\lambda)=\vartheta_0(n,\lambda)-2\sum_{m=n+1}^{\infty}\frac{\sin\bigl((m-n)\theta\bigr)}{\sin(\theta)}V(m)\vartheta(m,\lambda),\qquad n\in\Z_+.
\end{equation}


By using the fact that $|\sin(Nt)/\sin(t)|\leq N$ for any $N\in\Z_+$ and $t\in\R$, the following estimate can be proved by the same method as in the continuous case \cite[Chap. 4, Lem. 3.1]{Y1} :
\begin{equation}\label{estimate0}
|\vartheta(n,\lambda)-\vartheta_0(n,\lambda)|\leq \exp\left(\sum_{m=n+1}^{\infty}(m-n)|V(m)|\right)-1,\qquad \lambda\in[-1,1], n\in\Z_+
\end{equation}
(see also \cite[Appendix A.1]{Y2} for the method of iterations). In particular, the following form of the inequality \eqref{estimate0}, which can easily be deduced from the mean-value theorem, is useful for our purpose: there exists $C>0$ such that
\begin{equation}\label{estimate1}
|\vartheta(n,\lambda)-\vartheta_0(n,\lambda)|\leq C (1+n)^{-\rho+2},\qquad n\in\Z_+,\lambda\in[-1,1].
\end{equation}
Note also that $\vartheta(\cdot,\pm 1)=\{\vartheta(n,\pm 1)\}_{n=-1}^{\infty}$ is a solution of \eqref{Schroedinger} for $z=\pm 1$ and $\vartheta(n,\pm1)=(\pm 1)^{n}+o(1)$ as $n\to\infty$ (see \cite[Thm. 2.1 \& 4.1]{Y2}).

For two solutions $u$ and $v$ of  \eqref{Schroedinger}, their Wronskian is defined by 
\begin{equation}
\{u,v\}:=2^{-1}\bigl(u(n)v(n+1)-u(n+1)v(n)\bigr),
\end{equation}
and one can easily see that it does not depend on $n\in\Z_+$. Hence, it follows from the boundary condition \eqref{regular boundary} that $\omega(z):=\{\varphi(\cdot,z),\vartheta(\cdot,z)\}=-2^{-1}\vartheta(-1,z)$ and $\omega_0(z):=\{\varphi_0(\cdot,z),\vartheta_0(\cdot,z)\}=-2^{-1}\zeta(z)^{-1}$. Then, \emph{the Jost function} is defined by
$$
\Omega(z):=\omega(z)/\omega_0(z)=-2\zeta(z)\omega(z)=\zeta(z)\vartheta(-1,z).
$$
As for potential scattering on the continuous half-line, for $z\in\Pi$, $\Omega(z)=0$ if and only if $z$ is an eigenvalue of $H$ with the eigenfunction $\vartheta(\cdot,z)_{\restriction_{\Z_+}}$.  $\Omega(\lambda)\equiv\Omega(\lambda+i0)\neq 0$ also holds for $\lambda\in(-1,1)$ and the function $[-1,1]\ni\lambda\mapsto\Omega(\lambda)$ is continuous. It is therefore natural to say that $H$ has a  \emph{threshold resonance}  at $z= +1$ or $z=-1$ if $\Omega(+1)=0$ or $\Omega(-1)=0$, respectively. Note also that $\vartheta(\cdot,\lambda)$ and $\overline{\vartheta(\cdot,\lambda)}$ are linearly independent solutions of \eqref{Schroedinger} for $z=\lambda\in(-1,1)$ and the equality
\begin{equation}\label{regular by Jost}
\varphi(n,\lambda)=i\left(\omega(\lambda)\overline{\vartheta(n,\lambda)}-\overline{\omega(\lambda)}\vartheta(n,\lambda)\right)(1-\lambda^2)^{-1/2}
\end{equation}
holds for each $n\in\Z_+$.

 
 We finally introduce some quantities associated to $H$. The limit amplitude $a$ and the phase shift $\eta$ associated with $H$ are continuous functions on $(-1,1)$ defined by the relation
 $$
 \Omega(\lambda)=a(\lambda)\e^{i\eta(\lambda)}, \qquad a(\lambda)=|\Omega(\lambda)|.
 $$ 
We also define the \emph{scattering matrix} $s(\lambda):=\overline{\Omega(\lambda)}/\Omega(\lambda)=\e^{-2i\eta(\lambda)}$ for $\lambda\in(-1,1)$. Note that we defined $a$ and $\eta$ as functions of the spectral parameter $\lambda$ while they are defined as functions of $\theta$ in \cite{Y2}. 
Note that $\eta$ is determined only up to modulo $2\pi$. However, this is sufficient for our purpose since we are interested in an identity of the form
\begin{equation}\label{classical Levinson}
\eta(+1)-\eta(-1)=\pi\Bigl(N+\Delta_{-}+\Delta_{+}\Bigr),
\end{equation}
where $N:=\#\sigma_{\pp}(H)<\infty$ is the number of bound states of $H$ (see \cite[Rem. 4.12]{Y2}) and the correction terms $\Delta_{\pm}=0$ if $\Omega(\pm1)\neq 0$ and $\Delta_{\pm}=1/2$ if $\Omega(\pm1)=0$. The relation \eqref{classical Levinson} is called \emph{Levinson's theorem} and the usual proof based on complex analysis can be found in \cite[Thm. 2.2]{HKS} or \cite[Thm. 5.10]{Y2}. 

\begin{remark}\label{remark1}
For discrete systems, the asymptotic behaviors of  the Jost function near thresholds have been studied for example in \cite[Thm 2.1]{HKS} or \cite{Y2}. In particular, by using \cite[Lem. 4.8 \&Thm. 4.9]{Y2} one can easily deduce that $s(\pm1)=1$ if $\Omega(\pm1)\neq0$ and $s(\pm1)=-1$ if $\Omega(\pm1)=0$.
\end{remark}


\section{Stationary expressions}\label{analysis}

In this section, we deduce stationary expressions for the wave operators $W_{\pm}$ for the pair $(H,H_0)$ in terms of the \emph{generalized Fourier transforms} which have been constructed in \cite[Sec. 6]{Y2}.  In order to get formula \eqref{formula of wo}, we adopt the approach used in the continuous case \cite{I}, where the formula is obtained by rewriting the decomposition of the kernel of the generalized Fourier transform \cite[Chap. 4, eq.(2.33)]{Y1} as an operator identity.


We first define the wave functions
\begin{equation}
\psi_{\pm}(n,\lambda):=\sqrt{\frac{2}{\pi}}\frac{\varphi(n,\lambda)\sigma_{\mp}(\lambda)(1-\lambda^2)^{1/4}}{|\Omega(\lambda)|},\qquad n\in\Z_+,\lambda\in(-1,1),
\end{equation}
where $\sigma_+(\lambda):=\Omega(\lambda)/|\Omega(\lambda)|$ and $\sigma_-(\lambda)=\overline{\sigma_+(\lambda)}$. They satisfy $\sigma_{+}(\lambda)\sigma_-(\lambda)=1$ and $\sigma_-(\lambda)^2=s(\lambda)$. In the free case, since the corresponding Jost function identically equals $1$, we have
$$
\psi_{\pm,0}(n,\lambda)\equiv\psi_{\sin}(n,\lambda):=\sqrt{\frac{2}{\pi}}\frac{\sin\bigl((n+1)\theta\bigr)}{(1-\lambda^2)^{1/4}}, \qquad \lambda=\cos(\theta),\ \theta\in(0,\pi).
$$

The generalized Fourier transforms $\F_{\pm}$ associated with $H$ are defined by 
\begin{equation}\label{GFT}
[\F_{\pm}u](\lambda):=\sum_{n\in\Z_+}\psi_{\pm}(n,\lambda)u(n),\qquad \lambda\in(-1,1)
\end{equation}
for $u\in C_{\rm c}(\Z_+)$. Note that the operators $\F_{\pm}$ differ from the corresponding operator $F$ used in \cite[eq.(6.6)]{Y2} by the multiplicative factors $\sigma_{\mp}(\lambda)$. Then, as shown in \cite[Sec. 6]{Y2}, one can prove that $\F_{\pm}$ continuously extend to co-isometries from $\ell^2(\Z_+)$ to $L^2(-1,1)$ with $\Ran(\F_{\pm}^*)=\H_{\ac}(H)$. The adjoints $\F_{\pm}^*$ are given by $[\F_{\pm}^*g](n)=\int_{-1}^1 \psi_{\mp}(n,\lambda)g(\lambda)\d\lambda$ for $g\in L^2(-1,1)$ and $n\in\Z_+$. Moreover, $\F_{\pm}$ satisfy the intertwining relations $\F_{\pm}H=L\F_{\pm}$, where $L$ is the multiplication operator on $L^2(-1.1)$ by the function $\lambda\mapsto\lambda$.


In the free case, $\F_-=\F_+$ and they are simply denoted by $\F_{\sin}$. $\F_{\sin}$ is a unitary operator satisfying $\F_{\sin}H_0=L\F_{\sin}$ since $\{\psi_{\sin}(n,\cdot)\}_{n\in\Z_+}$ forms a complete orthonormal basis of $L^2(-1,1)$. We also define an auxiliary operator $\F_{\cos}$ by replacing the kernel $\psi_{\pm}(n,\lambda)$ in \eqref{GFT} with the kernel
\begin{equation*}
\psi_{\cos}(n,\lambda):=\sqrt{\frac{2}{\pi}}\frac{\cos\bigl((n+1)\theta\bigr)}{(1-\lambda^2)^{1/4}}, \qquad \lambda=\cos(\theta),\ \theta\in(0,\pi).
\end{equation*}

 
Now, we recall some basic results \cite{Y2} on the stationary scattering theory for $H$. Note that the following theorem can be applied to a more general class of perturbations of $H_0$, see \cite[Thm 6.4, 6.5 \& 6.7]{Y2}.
\begin{theorem}\label{theorem 3.1}
Under the assumption \ref{assumption}, the wave operators $W_{\pm}$ have stationary expressions $W_{\pm}=\F_{\pm}^*\F_{\sin}$, and accordingly they are complete. Moreover, the scattering operator ${\rm\bf S}:=W_+^*W_-$ has an expression ${\rm\bf S}=\F_{\sin}^*s\F_{\sin}$, where $s$ denotes the multiplication operator by the function $\lambda\mapsto s(\lambda)$.
\end{theorem}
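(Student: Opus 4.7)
The plan is to follow the standard stationary scattering approach as presented in \cite[Chap. 5]{Y1} and adapted to Jacobi operators in \cite[Sec. 6]{Y2}: verify directly that $\F_\pm^*\F_{\sin}$ coincides with the strong limit defining $W_\pm$. The preliminary observation is that both sides have the right algebraic structure. Indeed, $\F_{\sin}$ is unitary and $\F_\pm^*$ is an isometry from $L^2(-1,1)$ onto $\H_{\ac}(H)$ (since $\F_\pm$ is a co-isometry with $\Ran(\F_\pm^*)=\H_{\ac}(H)$), so $\F_\pm^*\F_{\sin}$ is an isometry from $\ell^2(\Z_+)$ onto $\H_{\ac}(H)$. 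The intertwining relations $\F_{\sin}H_0=L\F_{\sin}$ and $\F_\pm H=L\F_\pm$ give $\F_\pm^*\F_{\sin}H_0=H\F_\pm^*\F_{\sin}$, which is consistent with the defining property of $W_\pm$.

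The central identity to exploit is the decomposition
\[
\e^{itH}\e^{-itH_0}u-\F_\mp^*\F_{\sin}u=\e^{itH}\bigl(\F_{\sin}^*-\F_\mp^*\bigr)\e^{-itL}\F_{\sin}u,
\]
valid for any $u\in\ell^2(\Z_+)$, which follows from $\e^{-itH_0}=\F_{\sin}^*\e^{-itL}\F_{\sin}$ and $\e^{itH}\F_\mp^*=\F_\mp^*\e^{itL}$. Since $\e^{itH}$ is unitary, proving $W_\mp u=\F_\mp^*\F_{\sin}u$ reduces to the vanishing
\[
\bigl\|(\F_{\sin}^*-\F_\mp^*)\e^{-itL}g\bigr\|_{\ell^2(\Z_+)}\longrightarrow 0,\qquad t\to\mp\infty,
\]
for $g=\F_{\sin}u$ in a dense subset of $L^2(-1,1)$, say $g\in C_{\rm c}^\infty(-1,1)$.

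The main analytic step is to establish this decay. The kernel of $(\F_{\sin}^*-\F_\mp^*)$ is $\psi_{\sin}(n,\lambda)-\psi_\pm(n,\lambda)$, which, by the definitions of $\psi_\pm$ and $\psi_{\sin}$ and the Volterra equation \eqref{Volterra for real}, can be rewritten as a difference involving $\varphi(n,\lambda)-\varphi_0(n,\lambda)$ and the Jost function $\Omega(\lambda)$. For $g$ smooth and supported strictly inside $(-1,1)$, the oscillatory integral $\int_{-1}^1 (\psi_{\sin}(n,\lambda)-\psi_\pm(n,\lambda))\e^{-it\lambda}g(\lambda)\,\d\lambda$ can be estimated by a stationary phase or integration by parts argument, gaining inverse powers of $t$. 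The obstacle, which is the heart of the matter, is to control the $\ell^2(\Z_+)$-norm in $n$ uniformly, i.e.\ to extract enough decay in $t$ to absorb the behavior in $n$ coming from $\varphi(n,\lambda)$ growing polynomially via the Chebyshev-type structure; this is achieved using the decay estimate \eqref{estimate1} on the Jost solution together with compact support of $g$ away from the thresholds $\pm 1$ where $|\Omega|$ may vanish. Completeness then follows automatically from $\Ran(\F_\pm^*)=\H_{\ac}(H)$.

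Finally, for the scattering operator, the key observation is that
\[
\psi_+(n,\lambda)=\frac{\sigma_-(\lambda)}{\sigma_+(\lambda)}\psi_-(n,\lambda)=\sigma_-(\lambda)^2\psi_-(n,\lambda)=s(\lambda)\psi_-(n,\lambda),
\]
using $\sigma_+\sigma_-=1$ and $\sigma_-^2=s$. This yields the operator identity $\F_+=M_s\F_-$, where $M_s$ denotes multiplication by $s(\lambda)$. Combining with the already established stationary expressions,
\[
{\rm\bf S}=W_+^*W_-=\F_{\sin}^*\F_+\F_-^*\F_{\sin}=\F_{\sin}^*M_s\F_-\F_-^*\F_{\sin}=\F_{\sin}^*M_s\F_{\sin},
\]
where the last step uses that $\F_-$ is a co-isometry, so $\F_-\F_-^*=\id_{L^2(-1,1)}$.
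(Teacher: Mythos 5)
First, a point of comparison: the paper does not prove Theorem~\ref{theorem 3.1} at all --- it is imported wholesale from \cite[Thm. 6.4, 6.5 \& 6.7]{Y2} --- so your proposal is really being measured against Yafaev's proof. Your overall strategy is the standard one and the algebraic parts are correct: the reduction of $W_\mp u=\F_\mp^*\F_{\sin}u$ to the vanishing of $\|(\F_{\sin}^*-\F_\mp^*)\e^{-itL}g\|$ via the intertwining relations is an exact identity, completeness does follow from $\Ran(\F_\pm^*)=\H_{\ac}(H)$, and the final paragraph deriving ${\rm\bf S}=\F_{\sin}^*s\F_{\sin}$ from $\psi_+=s\psi_-$, $\F_+=s\F_-$ and $\F_-\F_-^*=\id_{L^2(-1,1)}$ is complete and correct.

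The gap is in the central analytic step, and as you have written it the argument would not go through. You propose to rewrite $\psi_{\sin}(n,\lambda)-\psi_\pm(n,\lambda)$ as ``a difference involving $\varphi(n,\lambda)-\varphi_0(n,\lambda)$'' and to control the behaviour in $n$ by the Jost estimate \eqref{estimate1}. But $\varphi(n,\lambda)-\varphi_0(n,\lambda)$ does not decay in $n$ (both are bounded oscillations with different phase shifts), and \eqref{estimate1} controls only the Hilbert--Schmidt remainder $K_0$, not the leading part of the difference. More seriously, $\psi_{\sin}$ and $\psi_\pm$ each contain both exponentials $\e^{\pm i(n+1)\theta}$, and the integral $\int_{-1}^1\e^{+i(n+1)\theta(\lambda)}\e^{-it\lambda}g(\lambda)\,\d\lambda$ \emph{does} have stationary points as $t\to-\infty$, so a blanket appeal to ``stationary phase gaining inverse powers of $t$'' is false. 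The proof works only after one observes --- via \eqref{regular by Jost}, which is precisely the computation carried out in Section~\ref{analysis} of the paper --- that the incoming exponentials cancel identically, leaving $\psi_{\sin}-\psi_+=-\tfrac{1}{2}\left(i\psi_{\cos}+\psi_{\sin}\right)(s-1)-K_0$, i.e.\ a purely outgoing oscillation proportional to $\zeta(\lambda)^{n+1}=\e^{-i(n+1)\theta}$ plus a Hilbert--Schmidt kernel. For the outgoing part the phase $-(n+1)\theta(\lambda)-t\lambda$ has derivative $(n+1)/\sin\theta-t\geq c\,(1+n+|t|)$ for $t<0$ and $\lambda\in\supp g$, so one integration by parts gives $O\bigl((1+n+|t|)^{-1}\bigr)$ and hence $o(1)$ in $\ell^2_n$; the $K_0$ part vanishes because $\e^{-itL}g\rightharpoonup 0$ and $K_0$ is compact. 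This one-sidedness of the surviving oscillation is the mechanism that selects $t\to-\infty$ over $t\to+\infty$; without it, nothing in your sketch prevents the identical argument from ``proving'' $W_+=\F_-^*\F_{\sin}$ as well.
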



We now deduce an explicit formula for $W_-$ from the above stationary expression by looking at the kernel $\psi_+(n,\lambda)=\overline{\psi_-(n,\lambda)}$ of $\F_-^*$. By using \eqref{regular by Jost}, we have
\begin{align*}
\psi_+(n,\lambda)&=\sqrt{\frac{2}{\pi}}\frac{\overline{\vartheta(n,\lambda)\zeta(\lambda)}-s(\lambda)\vartheta(n,\lambda)\zeta(\lambda)}{2i(1-\lambda^2)^{1/4}}\\
&=\psi_{\sin}(n,\lambda)+\sqrt{\frac{2}{\pi}}\frac{i\zeta(\lambda)^{n+1}}{2(1-\lambda^2)^{1/4}}\bigl(s(\lambda)-1\bigr)+K_0(n,\lambda)\notag\\
&=\psi_{\sin}(n,\lambda)+\frac{i\psi_{\cos}(n,\lambda)+\psi_{\sin}(n,\lambda)}{2}\bigl(s(\lambda)-1\bigr)+K_0(n,\lambda),
\end{align*}
where
\begin{equation}\label{remainder term}
K_0(n,\lambda):=\sqrt{\frac{2}{\pi}}\frac{\overline{p(n,\lambda)\zeta(\lambda)}-s(\lambda)p(n,\lambda)\zeta(\lambda)}{2i}\qquad\text{with}\qquad p(n,\lambda):=\frac{\vartheta(n,\lambda)-\vartheta_0(n,\lambda)}{(1-\lambda^2)^{1/4}}.
\end{equation}
In the sequel, we use the same letters for integral operators and their kernels. Then, based on Theorem \ref{theorem 3.1} and the above expansion, one infers
\begin{equation}\label{formula1}
W_-={\bf 1}+\frac{U+{\bf 1}}{2}\bigl({\rm\bf S}-{\bf 1})+K_0\F_{\sin},
\end{equation}
where $U:=i\F_{\cos}^{*}\F_{\sin}$.


 \section{Rescaled energy representation and the proof of Theorem \ref{main result 1}}\label{topology}

The aim of this section is to understand the operator $U=i\F_{\cos}^*\F_{\sin}$ in a rescaled energy representation $(\RR,B,A)$ defined below. This new representation enables us to consider $U$, and therefore $W_-$ as pseudo-differential operators on $L^2(\R)$. Note that the expression for the operator $(U+1)/2$ in \eqref{formula1} in front of ${\rm\bf S}-\one$ is universal in the sense that it is independent of the potential, and such a phenomenon has been commonly observed for continuous systems (see \cite{R} for more information).


We introduce a unitary operator $\RR_{0}:L^2(-1,1)\to L^2(\R)$ defined by
\begin{equation}
[\RR_0 g](\beta):=\cosh(\beta)^{-1}g\left(\tanh(\beta)\right),\qquad \text{a.e.}\ \beta\in\R
\end{equation}
for $g\in L^2(-1,1)$. Then, a unitary operator $\RR:\ell^2(\Z_+)\to L^2(\R)$ is defined by $\RR:=\RR_0\F_{\sin}$. One observes that the self-adjoint operator $B$ defined in Theorem \ref{main result 1} satisfies $\tanh(B)=H_0$. The operator $\RR B\RR^*$ corresponds to the multiplication operator $X$ on $L^2(\R)$ by the function $\beta\mapsto\beta$, and the Weyl relation \eqref{Weyl CCR} is satisfied for $B$ and $A:=\RR^* D\RR$ with $D=-i\partial_{\beta}$ the momentum operator in $L^2(\R)$.


\begin{proposition}\label{prop1}
The following equality holds:
\begin{equation}\label{expression for U}
U=-\tanh(\pi A)+i\tanh(B/2)\cosh(\pi A)^{-1}+(\text{a compact operator}).
\end{equation}
\end{proposition}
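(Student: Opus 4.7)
My strategy is to pass to the rescaled energy representation, compute $\RR U\RR^*$ explicitly as a singular integral operator on $L^2(\R)$, and match its kernel to that of $-\tanh(\pi D)+i\tanh(X/2)\cosh(\pi D)^{-1}$ modulo compact, where $X$ and $D=-i\partial_\beta$ are the position and momentum operators on $L^2(\R)$ (so that $\RR B\RR^*=X$ and $\RR A\RR^*=D$). Since $\RR=\RR_0\F_{\sin}$ and $\F_{\sin}$ is unitary, the conjugation collapses to $\RR U\RR^*=i\RR_0(\F_{\sin}\F_{\cos}^*)\RR_0^*$, so the problem reduces to computing a single kernel and extracting its principal part.

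For the kernel computation, the product-to-sum identity $\sin((n{+}1)\theta)\cos((n{+}1)\phi)=\tfrac12[\sin((n{+}1)(\theta{+}\phi))+\sin((n{+}1)(\theta{-}\phi))]$, the distributional summation $\sum_{m\ge 1}\sin(m\alpha)=\tfrac12\cot(\alpha/2)$, and $\cot x+\cot y=\sin(x+y)/(\sin x\sin y)$ collapse the kernel of $\F_{\sin}\F_{\cos}^*$ on $L^2(-1,1)$ to $(1-\lambda^2)^{1/4}/[\pi(1-\mu^2)^{1/4}(\mu-\lambda)]$. Substituting $\lambda=\tanh\beta,\mu=\tanh\gamma$ with the $\RR_0$-Jacobians, together with $\tanh\gamma-\tanh\beta=\sinh(\gamma-\beta)/(\cosh\beta\cosh\gamma)$, yields
\[
K(\beta,\gamma)=\frac{1}{\pi}\frac{\cosh(\gamma)^{1/2}}{\cosh(\beta)^{1/2}\sinh(\gamma-\beta)}.
\]
Decomposing $K$ into its symmetric and antisymmetric parts under $(\beta,\gamma)\leftrightarrow(\gamma,\beta)$ and introducing the center-of-mass coordinates $\sigma=(\beta+\gamma)/2, \tau=(\gamma-\beta)/2$, the identity $\cosh\beta\cosh\gamma=\cosh^2\sigma+\sinh^2\tau$ together with $\cosh\gamma\pm\cosh\beta=2\cosh\sigma\cosh\tau$ or $2\sinh\sigma\sinh\tau$ produces the antisymmetric part $\cosh\sigma/[2\pi(\cosh^2\sigma+\sinh^2\tau)^{1/2}\sinh\tau]$ and symmetric part $\sinh\sigma/[2\pi\cosh\tau(\cosh^2\sigma+\sinh^2\tau)^{1/2}]$.

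The identification then proceeds by replacing $(\cosh^2\sigma+\sinh^2\tau)^{1/2}$ with $\cosh\sigma$, producing the approximate antisymmetric kernel $(2\pi\sinh\tau)^{-1}$ and symmetric kernel $\tanh\sigma/(2\pi\cosh\tau)$. The first is a pure convolution; from the classical identity $\mathrm{PV}\int e^{-i\xi s}\sinh(s/2)^{-1}\,ds=-2\pi i\tanh(\pi\xi)$ its Fourier multiplier is $i\tanh(\pi D)$, and after multiplying by the overall $i$ in $\RR U\RR^*=iK$ this yields $-\tanh(\pi D)$. For the second, the kernel of $\cosh(\pi D)^{-1}$ is $(2\pi\cosh((\beta-\gamma)/2))^{-1}$, so the symmetrization of $\tanh(X/2)\cosh(\pi D)^{-1}$ has kernel $[\tanh(\beta/2)+\tanh(\gamma/2)]/[4\pi\cosh\tau]$. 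The algebraic identity
\[
\tanh\sigma-\tfrac12\bigl[\tanh(\beta/2)+\tanh(\gamma/2)\bigr]=\frac{[\tanh(\beta/2)+\tanh(\gamma/2)]\cosh\tau}{2\cosh\sigma},
\]
obtained from $\tanh(u+v)=(\tanh u+\tanh v)/(1+\tanh u\tanh v)$ combined with $1\pm\tanh u\tanh v=\cosh(u\pm v)/(\cosh u\cosh v)$, shows that the difference of these two kernels is $\tanh\sigma/[4\pi\cosh(\beta/2)\cosh(\gamma/2)]$, manifestly Hilbert--Schmidt. Together with the compactness of the commutator $[\tanh(X/2),\cosh(\pi D)^{-1}]$ (a standard consequence of $[f(X),g(D)]\in\K(L^2(\R))$ when $f',g'\in C_0(\R)$), this identifies the symmetric piece with $\tanh(X/2)\cosh(\pi D)^{-1}$ modulo compact.

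The main obstacle is showing that the remainders produced by the approximation $(\cosh^2\sigma+\sinh^2\tau)^{1/2}\mapsto\cosh\sigma$ define compact operators. The pointwise error equals $-\sinh^2\tau/[\cosh\sigma(\cosh^2\sigma+\sinh^2\tau)^{1/2}(\cosh\sigma+(\cosh^2\sigma+\sinh^2\tau)^{1/2})]$, and after multiplication by the leading factors $\cosh\sigma/\sinh\tau$ or $\sinh\sigma/\cosh\tau$ the resulting kernels are uniformly bounded by $C/\cosh\sigma$ but not globally in $L^2(\R^2)$. These remainders will be handled by realizing them as elements of the $C^*$-algebra generated by operators of the form $f(B)g(A)$ with $f,g\in C_0$, for which compactness is a standard consequence of the Weyl relation \eqref{Weyl CCR}. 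Carrying out this bookkeeping of error terms, rather than the symbol identification itself, is the technical heart of the proof.
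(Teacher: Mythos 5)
Your overall route --- passing to the rescaled energy representation, computing the kernel of $\F_{\sin}\F_{\cos}^{*}$ as the singular integral with kernel $(1-\lambda^2)^{1/4}/[\pi(\mu-\lambda)(1-\mu^2)^{1/4}]$, changing variables to $(\beta,\gamma)$, and matching against the kernels of $\tanh(\pi D)$ and $\cosh(\pi D)^{-1}$ --- is the same as the paper's, and your symbol identification (the Fourier transforms of $1/\sinh(s/2)$ and $1/\cosh(s/2)$, and the identity $\tanh\sigma-\tfrac12[\tanh(\beta/2)+\tanh(\gamma/2)]=[\tanh(\beta/2)+\tanh(\gamma/2)]\cosh\tau/(2\cosh\sigma)$) checks out. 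But the proof is not complete: the step you yourself call the technical heart --- compactness of the remainders created by replacing $(\cosh^2\sigma+\sinh^2\tau)^{1/2}$ by $\cosh\sigma$ --- is only announced. The route you sketch for it (realizing these explicit kernels as elements of the algebra generated by $f(B)g(A)$ with $f,g\in C_0$) is not exhibited anywhere in your computation, so as written the argument stops exactly where the compactness must be established. Moreover, your claim that these remainders are not in $L^2(\R^2)$ is incorrect, and it is what drives you to the unexecuted detour: the antisymmetric remainder equals $-\sinh\tau/\bigl[2\pi(\cosh^2\sigma+\sinh^2\tau)^{1/2}\bigl(\cosh\sigma+(\cosh^2\sigma+\sinh^2\tau)^{1/2}\bigr)\bigr]$, which is bounded by $C|\sinh\tau|(\cosh\sigma+|\sinh\tau|)^{-2}$; splitting the integral of its square according to $|\sinh\tau|\le\cosh\sigma$ or $|\sinh\tau|\ge\cosh\sigma$ gives a finite Hilbert--Schmidt norm, and the symmetric remainder is handled identically. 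So the gap is fillable by a direct estimate, but it must actually be filled. A secondary point: the distributional summation $\sum_m\sin(m\alpha)=\tfrac12\cot(\alpha/2)$ and the principal-value interpretation of the resulting operator need justification; the paper does this by regularizing with $\zeta(\nu+i\varepsilon)$, summing the Chebyshev generating function, and invoking dominated convergence, Sokhotski--Plemelj, and the $L^2$-convergence of the truncated Hilbert transform.

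For comparison, the paper avoids your error-term bookkeeping entirely. Writing $\cosh(\beta)^{-1/2}=b(\beta)/(\e^{\beta/2}+\e^{-\beta/2})$ with $b(t)=(\e^{t}+\e^{-t})^{-1/2}(\e^{t/2}+\e^{-t/2})$, it splits $(\e^{\gamma/2}+\e^{-\gamma/2})/\sinh(\gamma-\beta)$ into the two convolution kernels $1/\sinh((\gamma-\beta)/2)$ and $1/\cosh((\gamma-\beta)/2)$ and arrives at the \emph{exact} identity $\RR U\RR^{*}=b(X)\bigl[-\tanh(\pi D)+i\tanh(X/2)\cosh(\pi D)^{-1}\bigr]b(X)^{-1}$, so that the only compactness required is that of a single commutator with $b(X)$. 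If you prefer to keep your symmetric/antisymmetric decomposition, you should either carry out the Hilbert--Schmidt estimates above or adopt this conjugation trick.
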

\begin{proof}
For $f,g\in C_{c}^{\infty}(-1,1)$, we have formally
\begin{align*}
&\left\langle \F_{\sin}^*f,\left(\frac{U+\one}{2}\right)\F_{\sin}^*g\right\rangle_{\ell^2(\Z_+)}\\
&=\sum_{n=0}^{\infty }\overline{[\F_{\sin}^*f](n)}\left(\frac{i}{2}\sqrt{\frac{2}{\pi}}\int_{-1}^1\lim_{\varepsilon\downarrow 0}\frac{\zeta(\nu+i\varepsilon)^{n+1}}{(1-\nu^2)^{1/4}}g(\nu)\d \nu\right)\\
&=\lim_{\varepsilon\downarrow 0}\sum_{n=0}^{\infty }\overline{[\F_{\sin}^*f](n)}\left(\frac{i}{2}\sqrt{\frac{2}{\pi}}\int_{-1}^1 \frac{\zeta(\nu+i\varepsilon)^{n+1}}{(1-\nu^2)^{1/4}}g(\nu)\d \nu\right)\\
&=\lim_{\varepsilon\downarrow 0}\int_{-1}^{1}\overline{f(\lambda)}\left[\sum_{n=0}^{\infty }\psi_{\sin}(n,\lambda)\left(\frac{i}{2}\sqrt{\frac{2}{\pi}}\int_{-1}^1 \frac{\zeta(\nu+i\varepsilon)^{n+1}}{(1-\nu^2)^{1/4}}g(\nu)\d \nu\right)\right]\d\lambda.
\end{align*}
The above computations can be justified as follows: note first that the series $\sum_{n=0}^{\infty}[\F_{\sin}^*f](n)$ converges absolutely since $f\in C_{\rm c}^{\infty}(-1,1)$. By using the fact that $\sup_{\nu\in\supp g}|\zeta(\nu+i\varepsilon)|\leq 1$ for any $\varepsilon\geq 0$, where the equality holds iff $\varepsilon=0$, we justify the step from the second to the third line by applying the Lebesgue dominated convergence theorem twice. Finally, we obtain the last expression by the definition of the operator $\F_{\sin}$.


By using Fubini's theorem, one has for each fixed $\varepsilon>0$
\begin{align*}
&\sum_{n=0}^{\infty}\psi_{\sin}(n,\lambda)\frac{i}{2}\sqrt{\frac{\pi}{2}}\int_{-1}^1 \frac{\zeta(\nu+i\varepsilon)^{n+1}}{(1-\nu^2)^{1/4}}g(\nu)\d \nu\\
&=\frac{i}{2}\sqrt{\frac{\pi}{2}}\int_{-1}^1\sum_{n=0}^{\infty}\psi_{\sin}(n,\lambda)\frac{\zeta(\nu+i\varepsilon)^{n+1}}{(1-\nu^2)^{1/4}}g(\nu)\d \nu\\
&=\frac{i}{\pi}\int_{-1}^{1}(1-\lambda^2)^{1/4}\sum_{n=0}^{\infty}\frac{\sin\left((n+1)\theta\right)}{\sin\theta}\frac{\zeta(\nu+i\varepsilon)^{n+1}}{(1-\nu^2)^{1/4}}g(\nu)\d\nu,
\end{align*}
where $\lambda=\cos(\theta)$. Now, by using the formula for the generating function of the Chebyshev polynomials of the second kind (see \cite[18.3.4.1]{J}), one gets
\begin{equation}
\left\langle f,\F_{\sin}\left(\frac{U+\one}{2}\right)\F_{\sin}^*g\right\rangle_{L^2(-1,1)}=\lim_{\varepsilon\downarrow 0}\langle f,L_{\varepsilon}g\rangle_{L^2(-1,1)},
\end{equation}
where the kernel of $L_{\varepsilon}$  is $L_{\varepsilon}(\lambda,\nu):=(i/2\pi)(1-\lambda^2)^{1/4}(\nu-\lambda+i\varepsilon)^{-1}(1-\nu^2)^{-1/4}$.


For the next step, we set $\f(\lambda):=(1-\lambda^2)^{1/4}\overline{f(-\lambda)}$ and $\g({\nu}):=(1-\nu^{2})^{-1/4}g(\nu)$, and consider $\f$, $\g$ and the convolution $\f\ast\g$ as elements of $C_{\rm c}^{\infty}(\R)$. Then, by Fubini's theorem and a change of variables
\begin{align*}
\langle f,L_{\varepsilon}g\rangle_{L^2(-1,1)}
&=\frac{i}{2\pi}\int_{\R}\frac{(\f\ast\g)(\nu)}{\nu+i\varepsilon}\d\nu\to\frac{(\f\ast\g)(0)}{2}+\frac{i}{2\pi}\PV\int_{\R}\frac{(\f\ast\g)(\nu)}{\nu}\d\nu,\qquad \text{as}\ \varepsilon\downarrow 0.
\end{align*}
Here, the symbol $\PV$ stands for the principal value integral and we have used the Sokhotski-Plemelj formula. By using the strong $L^2(\R)$ convergence of the truncated Hilbert transform (see \cite[Chap. 3, Sec. 4]{D}), we obtain
\begin{equation}\label{expression of U as singular integral}
\left\langle f,\F_{\sin}\left(\frac{U+\one}{2}\right)\F_{\sin}^*g\right\rangle_{L^2(-1,1)}=\frac{1}{2}\langle f,g\rangle_{L^2(-1,1)}+\left\langle f,\PV L_0g \right\rangle_{L^2(-1,1)},
\end{equation}
where the operator $\PV L_0$ is a singular integral operator with the Schwartz kernel $L_0(\lambda,\nu):=(i/2\pi)(1-\lambda^2)^{1/4}(\nu-\lambda)^{-1}(1-\nu^2)^{-1/4}$.


By a change of variable, one can easily observe that for $h\in C_{\rm c}^{\infty}(\R)$ and a.e. $\beta\in\R$
\begin{align*}
\left[\RR_0(\PV L_0) \RR_0^* h\right](\beta)&=\frac{i}{2\pi}\PV \int_{\R}\cosh(\beta)^{-1/2}\frac{h(\gamma)}{\sinh(\gamma-\beta)}\cosh(\gamma)^{1/2}\d\gamma\\
&=\frac{i}{2\pi}\PV \int_{\R}\frac{b(\beta)}{\e^{\beta/2}+\e^{-\beta/2}}\frac{h(\gamma)}{\sinh(\gamma-\beta)}\frac{\e^{\gamma/2}+\e^{-\gamma/2}}{b(\gamma)}\d\gamma\\
&=\frac{i}{4\pi}\PV \int_{\R}\frac{b(\beta)\, \e^{\beta/2}}{\e^{\beta/2}+\e^{-\beta/2}}\left[\frac{1}{\sinh\left((\gamma-\beta)/2\right)}+\frac{1}{\cosh\left((\gamma-\beta)/2\right)}\right]\frac{h(\gamma)}{b(\gamma)}\d\gamma\\
&+\frac{i}{4\pi}\PV \int_{\R}\frac{b(\beta)\, \e^{-\beta/2}}{\e^{\beta/2}+\e^{-\beta/2}}\left[\frac{1}{\sinh\left((\gamma-\beta)/2\right)}-\frac{1}{\cosh\left((\gamma-\beta)/2\right)}\right]\frac{h(\gamma)}{b(\gamma)}\d\gamma,
\end{align*}
where $b(t):=(\e^{t}+\e^{-t})^{-1/2}(\e^{t/2}+\e^{-t/2})$ for $t\in\R$. Then, we get
\begin{align*}
\left[2\RR_0(\PV L_0) \RR_0^* h\right](\beta)&=\frac{ib(\beta)}{2\pi}\PV \int_{\R}\left[-\frac{1}{\sinh\left((\beta-\gamma)/2\right)}+\frac{\tanh(\beta/2)}{\cosh\left((\beta-\gamma)/2\right)}\right]\frac{h(\gamma)}{b(\gamma)}\d\gamma.
\end{align*}
Taking the Fourier transforms of hyperbolic functions (see \cite[Table 20.1]{J}) into account,  we have
\begin{equation}\label{expression for U2}
2\RR_0(\PV L_0) \RR_0^*=b(X)[-\tanh(\pi D)+i\tanh(X/2)\cosh(\pi D)^{-1}]b(X)^{-1}.
\end{equation}
This shows that the operator $2\RR_0\PV L_0\RR_0^*$ continuously extends to a bounded operator on $L^2(\R)$. Note that $\RR_0$ maps the subspace $C_c^{\infty}(-1,1)$ into the subspace $C_c^{\infty}(\R)$ bijectively.  Therefore, the equality $\RR U\RR^*=2\RR_0\PV L_0\RR_0^*$ follows from the equality \eqref{expression of U as singular integral} since $f$ and $g$ are arbitrary elements in $C_c^{\infty}(-1,1)$.  Now, one can easily observe that the commutator $[b(X),-\tanh(\pi D)+i\tanh(X/2)\cosh(\pi D)^{-1}]$ is compact since all the derivatives of the functions $b$, $\tanh$ and $\cosh(\cdot)^{-1}$ decay exponentially at infinity. Hence, the equality \eqref{expression for U} follows from \eqref{expression for U2}.
\end{proof}

\begin{proof}[Proof of Theorem \ref{main result 1}]
In view of \eqref{formula1} and Proposition \ref{prop1}, it suffices to prove that $K_0\F_{\sin}$ in the equality \eqref{formula1} is compact. Indeed, it follows from the estimate \eqref{estimate1} and from the equality $|s(\lambda)|=1$ that there exists $C'>0$ such that
\begin{equation}\label{estimate2}
|K_0(n,\lambda)|\leq C'(1-\lambda^2)^{-1/4}(1+n)^{-\rho+2}
\end{equation}
for any $\lambda\in(-1,1)$ and $n\in\Z_+$. Since the function $\lambda\mapsto(1-\lambda^2)^{-1/4}$ is square integrable on $(-1,1)$, $K_0\in\ell^2(\Z_+)\otimes L^2(-1,1)$ provided $\rho>5/2$. This implies that the kernel $(n,m)\mapsto [\F_{\sin}^* K_0(n,\cdot)](m)$ of the integral operator $K_0\F_{\sin}$ belongs to $\ell^2(\Z_+\times\Z_+)$. Hence, $K_0\F_{\sin}$ is a Hilbert-Schmidt operator and this finishes the proof. 
\end{proof}


\section{Topological version of Levinson's theorem}

A typical application of explicit formulas for the wave operators is the so-called \emph{topological version of Levinson's theorem} \cite{BS,I,KR,R}, which is an index theoretic interpretation of an equality of the type \eqref{classical Levinson}. Once an explicit formula for $W_-$ in terms of $(B,A)$ satisfying the Weyl relation \eqref{Weyl CCR} is established, the resulting index theorem follows automatically from an abstract $K$-theoretic argument for the short exact sequence
\begin{equation}\label{E}
0\to\K\xhookrightarrow{\iota} \E_{\square}\xrightarrowdbl{\pi} C(\square)\cong C(\S^1)\to0.
\end{equation}
Here $\K$ is the set of compact operators, $\E_{\square}$ is the $C^*$-subalgebra of $\B\left(\ell^2(\Z_+)\right)$ generated by the operators $a(A)b(B)$ with $a$ and $b$ belonging to the algebra $C([-\infty,\infty])$ of bounded continuous functions on $\R$ having limits at $\pm\infty$, and the space $\square$ is the boundary of $[-\infty,\infty]\times[-\infty,\infty]$.  Note that the map $\pi$ is given by $a(A)b(B)\mapsto \left(a(-\infty)b,b(-\infty)a,a(+\infty)b,b(+\infty)a\right)$.


In the present situation, one first observes that if we define a rescaled scattering matrix $S(\beta):=s(\tanh(\beta))$ for $\beta\in\R$, then $S\in C([-\infty,\infty])$, and ${\rm\bf S}=S(B)$. Hence, one deduces from formula \eqref{formula of wo} that the wave operator $W_-$ belongs to $\E_{\square}$ under the assumption \eqref{assumption}. One also easily checks that $\pi(W_-)=(S(\cdot),\Gamma_-,1,\Gamma_+)$ with
\begin{equation*}
\Gamma_{\pm}(\alpha)=1+\frac{s(\pm1)-1}{2}\left[1-\tanh(\pi \alpha)\pm i\cosh(\pi\alpha)^{-1}\right],\qquad \alpha\in\R,
\end{equation*}
and that $\pi(W_-)$ is a unitary element in $C(\square)$ by using Remark \ref{remark1}.  Hence, it follows from Atkinson's characterization that $W_-$ is a Fredholm operator on $\ell^2(\Z_+)$.


Now there are two homotopy invariants associated with $W_-$. One is the winding number $\wn(\pi(W_-))$ of the closed curve $\pi(W_-)(\square)\subset\C\setminus\{0\}$. As a convention, we turn around $\square$ clockwise. The other is the Fredholm index $\Index(W_-)$, and they are connected via


\begin{theorem}
Under the assumption \eqref{assumption}, the following index formula holds:
\begin{equation}\label{topological Levinson's theorem}
\wn\left(\pi(W_-)\right)=-\Index(W_-).
\end{equation}
\end{theorem}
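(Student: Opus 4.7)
The strategy is the standard $K$-theoretic argument used in the topological approach to Levinson's theorem in \cite{KR,R}. The key input Theorem \ref{main result 1} provides is that $W_-\in\E_{\square}$ with $\pi(W_-)$ unitary in $C(\square)$, so by Atkinson's theorem $W_-$ is Fredholm and defines a class $[\pi(W_-)]\in K_1(C(\square))$. The index formula is then obtained from the six-term exact sequence in $K$-theory associated with \eqref{E}.

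Concretely, first I would apply the six-term exact sequence to \eqref{E} and identify the relevant groups. One has $K_0(\K)\cong\Z$, with isomorphism realized by the Fredholm index, and $K_1(C(\square))\cong K_1(C(\S^1))\cong\Z$, where a generator of $K_1(C(\S^1))$ is the class of the identity function $\S^1\hookrightarrow\C^\times$ and the corresponding isomorphism sends $[u]$ to the winding number $\wn(u)$ of $u:\S^1\to\C^\times$ around $0$. Under these identifications, the index map (boundary map) $\partial:K_1(C(\square))\to K_0(\K)$ becomes a homomorphism $\Z\to\Z$, hence multiplication by an integer $c$. By general principles, for any $u\in\E_{\square}$ with $\pi(u)$ unitary, one has $\Index(u)=c\cdot\wn(\pi(u))$.

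The next step is to compute the constant $c$, which amounts to evaluating the index map on a single known generator. A convenient choice is the unilateral shift $T$ on $\ell^2(\Z_+)$, or rather any element of $\E_{\square}$ whose image under $\pi$ has winding number $\pm 1$ and whose Fredholm index is explicitly computable. Since $\E_{\square}$ is tightly related to the Toeplitz algebra (as anticipated in Remark \ref{remark3}), a Toeplitz-type generator with symbol of winding number $+1$ and Fredholm index $-1$ (because of the clockwise orientation convention fixed on $\square$) gives $c=-1$. Applying this to $W_-$ yields $\wn(\pi(W_-))=-\Index(W_-)$, which is the claim.

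The main obstacle is carefully pinning down the sign $c$. It depends on two conventions that must be reconciled: the orientation chosen on $\square$ (here clockwise), and the direction in which the boundary map of \eqref{E} is computed in terms of lifts of unitaries. I would handle this by exhibiting one explicit Fredholm element of $\E_{\square}$ whose index and the winding number of its image in $C(\square)$ are both directly computable, using the expression for $T$ in the rescaled representation $(\RR,B,A)$ alluded to in Section~5. Once that single case is verified, the index formula \eqref{topological Levinson's theorem} follows for all elements of $\E_\square$ with unitary symbol, and in particular for $W_-$, by the universality of the six-term sequence.
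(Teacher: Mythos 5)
Your proposal is correct and follows essentially the same route as the paper, which likewise reduces the theorem to the abstract $K$-theoretic boundary-map computation for the short exact sequence \eqref{E} and simply cites \cite[Prop. 7]{KR} and \cite[Sec. 4]{R} for the details you spell out (six-term exact sequence, identification of $K_0(\K)$ and $K_1(C(\square))$ with $\Z$ via index and winding number, and normalization of the resulting integer on a shift/Toeplitz-type generator, consistent with Remark \ref{remark3}). The only substantive input, namely that $W_-\in\E_{\square}$ with $\pi(W_-)$ unitary so that $W_-$ is Fredholm, is used identically in both arguments.
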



We refer the reader to \cite[Prop. 7]{KR} or \cite[Sec. 4]{R} for purely $K$-theoretic arguments, and provide here a comparison of the relation  \eqref{classical Levinson} and the index formula \eqref{topological Levinson's theorem} as in \cite[Sec. 4]{I}. Note that it follows from the completeness and the Fredholmness of $W_-$ that $-\Index(W_-)$ is given by the number of eigenvalues of $H$, which we denoted by $N$ in Section \ref{Preliminaries}. On the other hand, the winding number comes from four contributions, one from each edge of $\square$. Since $S(\beta)=\e^{-2i\eta\left(\tanh(\beta)\right)}$, the contribution of $S(\cdot)$ is given by the phase shift $\eta$. One can also compute the contributions of $\Gamma_{\pm}$ by an inspection, and see that they equal $\pm\Delta_{\pm}$. According to the convention, $\wn\left(\pi(W_-)\right)$ is given by the signed sum
\begin{equation}
-\left\{\frac{-2\eta(+1)-\left(-2\eta(-1)\right)}{2\pi}\right\}+(-\Delta_-)+0-(+\Delta_+)=\frac{\eta(+1)-\eta(-1)}{\pi}-(\Delta_-+\Delta_+).
\end{equation}
Hence, \eqref{classical Levinson} can be obtained by rearranging the terms in the equality \eqref{topological Levinson's theorem}.


\begin{remark}\label{remark3}
Note that by using the addition formula for the sine function, one can easily prove that the shift operator $T$ satisfies $T=H_0+i(1-H_0^2)^{1/2}U^*$. Hence, it follows from the expression \eqref{expression for U} that
\begin{equation}\label{rescaled energy representation for T}
T=\tanh(B)-i\cosh(B)^{-1}\tanh(\pi A)+(\text{a compact operator}).
\end{equation}
Recall that the Toeplitz algebra $\TT$ is the $C^*$-subalgebra of $\B\left(\ell^2(\Z_+)\right)$ generated by the shift operator $T$, and that $\TT/\K\cong C(\T)$ with $\T=\{z\in\C\mid|z|=1\}$ \cite[Ex. 9.4.4]{RLL}.
 It follows from the equality \eqref{rescaled energy representation for T} that $\TT\subset\E_{\square}$ and that $\pi(T)=(\tanh(\cdot)+i\cosh(\cdot)^{-1},-1,\tanh(\cdot)-i\cosh(\cdot)^{-1},1)$. 
\end{remark}


\subsection*{Acknowledgements}
H. Inoue is supported by JSPS grant number JP18J21491. The authors gratefully acknowledge many helpful suggestions of professor S. Richard during the preparation of this paper. Especially, the decomposition of the operator $\RR_0\P L_0\RR_0^*$ in the proof of Proposition \ref{prop1} is due to him.



\begin{thebibliography}{00}

\bibitem{BS} J. Bellissard, H. Schulz-Baldes, \emph{Scattering theory for lattice operators in dimension $d \geq 3$}, Rev. Math. Phys. {\bf 24} no. 8 (2012), 1250020, 51 pp.


\bibitem{CS} A. M. Childs,  D. J. Strouse, \emph{Levinson's theorem for graphs}, J. Math. Phys. 52 (2011), no. 8, 082102, 9 pp.


\bibitem{CG} A. M. Childs, D. Gosset, \emph{Levinson's theorem for graphs II}, J. Math. Phys. 53 (2012), no. 10, 102207, 22 pp.



\bibitem{D} J. Duoandikoetxea, \emph{Fourier analysis}, Amer. Math. Soc., Providence, RI, vol. 19 (2001).



\bibitem{HKS} D. B. Hinton, M. Klaus, J. K. Shaw, \emph{Half-bound states and Levinson's theorem for discrete systems}, SIAM J. Math. Anal., vol 22, No. 3, pp. 754--768 (1991).



\bibitem{I} H. Inoue, \emph{Explicit formula for Schr\"odinger wave operators on the half-line for potentials up to optimal decay}, submitted.





\bibitem{J} A. Jefferey, H. Dai, \emph{Handbook of mathematical formulas and integrals}, 4th ed., Academic Press (2008).



\bibitem{KR} J. Kellendonk, S. Richard \emph{On the structure of the wave operators in one-dimensional potential scattering}, Mathematical Physics Electronic Journal {\bf 14} (2008), 1--21.



\bibitem{R} S. Richard, \emph{Levinson's theorem: An index theorem in scattering theory}, Operator theory: Advances and applications, Vol. 254 (2016), 149--203, Springer International Publishing.


\bibitem{RLL} M. Roerdam, F. Larsen, N. Laustsen, \emph{An introduction to K-theory for C*-algebras}, London Mathematical Society Student Texts 49, Cambridge University Press, Cambridge, 2000.




\bibitem{Y1} D. R. Yafaev, \emph{Mathematical scattering theory: analytic theory}, AMS, Mathematical surveys and monographs, vol. 158 (2010).


\bibitem{Y2} D. R. Yafaev, \emph{Analytic scattering theory for Jacobi operators and Bernstein-Szeg\"o asymptotics of orthogonal polynomials}, Rev. Math. Phys., vol. 30 (2017). 


\end{thebibliography}
\end{document}